\theoremstyle{plain}
    \newtheorem{thm}{Theorem}[section]
    \newtheorem{theorem}[thm]{Theorem}
\theoremstyle{definition}
    \newtheorem{remark}[thm]{Remark}
\theoremstyle{remark}
\newcommand{\authorfootnotes}{\renewcommand\thefootnote{\@fnsymbol\c@footnote}}
 \title[Direct New Q-Newton's method Backtracking for m equations in m variables]{A more direct and better variant of New Q-Newton's method Backtracking for m equations in m variables}
 \author{Tuyen Trung Truong}
   \address{Department of Mathematics, University of Oslo, Blindern 0851 Oslo, Norway}
  \email{tuyentt@math.uio.no}
    \date{\today}
    \keywords{Backtracking line search, Convergence guarantee, Newton's method, Rate of convergence, Systems of nonlinear equations}
   \subjclass[2010]{}
\begin{document}
\maketitle
{\centering\footnotesize To the victims in Kongsberg\par}

\begin{abstract} In some (joint) recent papers, the authors have developed a new family of modifications of Newton's method, for which Backtracking line search can be incorporated, for optimization. The new method, called New Q-Newton's method (and its Backtracking version), has good theoretical guarantee (concerning convergence to critical points, avoidance of saddle points and rate of convergence). This method can be used to solve a system of equations $g_1=\ldots =g_N=0$, by applying to the function $f=g_1^2+\ldots +g_N^2$. 

In the special case where the number of equations and the number of variables are the same, Newton's method can also be  used directly to the system, instead of via the function $f$ as above. While there are known problems with the direct application of Newton's method (such as it is known that there are attracting cycles of non-critical points), if it converges then it usually converges fast. Inspired by this fact, in this paper we apply the ideas of New Q-Newton's method directly to such a system, utilising the specialties of the cost function $f=||F||^2$, where $F=(f_1,\ldots ,f_m)$. 

The first algorithm proposed here is a modification of Levenberg-Marquardt algorithm, where we prove some new results on global convergence and avoidance of saddle points. 

The second algorithm proposed here is a modification of New Q-Newton's method Backtracking, where we use the operator $\nabla ^2f(x)+\delta ||F(x)||^{\tau}$ instead of $\nabla ^2f(x)+\delta ||\nabla f(x)||^{\tau}$. This new version is more suitable than New Q-Newton's method Backtracking itself, while currently has better avoidance of saddle points guarantee than Levenberg-Marquardt algorithms. 

Also, a general scheme for second order methods for solving systems of equations is proposed. We will also discuss a way to avoid that the limit of the constructed sequence is a solution of $H(x)^{\intercal}F(x)=0$ but not of $F(x)=0$.

\end{abstract}

\section{The algorithm, the result and proof, and some comments} 

In this short note we define a new variant of New Q-Newton's method Backtracking \cite{truong2021} (developed from \cite{truong-etal}), to better solve systems of equations. If $A$ is a square matrix, we denote by $minsp(A)=\min \{|\lambda |: $ $\lambda$ is an eigenvalue of $A\}$. Also, we denote by $A^{\intercal}$ the transpose of $A$. 

If $A:\mathbb{R}^m\rightarrow \mathbb{R}^m$ is an invertible {\bf symmetric} square matrix, then it is diagonalisable.  Let $V^{+}$ be the vector space generated by eigenvectors of positive eigenvalues of $A$, and $V^{-}$ the vector space generated by eigenvectors of negative eigenvalues of $A$. Then $pr_{A,+}$ is the orthogonal projection from $\mathbb{R}^m$ to $V^+$, and  $pr_{A,-}$ is the orthogonal projection from $\mathbb{R}^m$ to $V^-$. As usual, $Id$ means the $m\times m$ identity matrix.  Given $F:\mathbb{R}^m\rightarrow \mathbb{R}^m$ a $C^2$ function, we denote by $H(x)=JF(x)$ the Jacobian of $F$, and $f(x)=||F(x)||^2$. Note that $\nabla f(x)=2H(x)^{\intercal}F(x)$.  Hence the last While loop in the  algorithm  terminates after a finite time (see \cite{truong2021}). 

\medskip
{\color{blue}
 \begin{algorithm}[H]
\SetAlgoLined
\KwResult{Find a zero of $F:\mathbb{R}^m\rightarrow \mathbb{R}^m$}
Given: $0<\delta_0,\delta_1,\ldots ,\delta _m$  and $0<\tau $;\\
Define: $H(x):=JF(x)$ and $f(x)=||F(x)||^2$;\\
Define: $\kappa :=\inf _{i\not= j}|\delta _i-\delta _j|/2$;\\
Initialization: $x_0\in \mathbb{R}^m$\;
 \For{$k=0,1,2\ldots$}{ 
    $j=0$\\
  \If{$F(x_k)\neq 0$}{
  \If{$minsp(\nabla ^2f(x_k))>||F(x_k)||^{\tau}$}
   {  ~~~~\While{$minsp(\nabla ^2f(x_k)+\delta _j||F(x_k)||)<\kappa ||F(x_k)||$}{j:=j+1}
   ~~~$A_k:=\nabla ^2f(x_k)+\delta _j||F(x_k)||\times Id$\\
  {\bf else}\\
  ~~~~\While{$minsp(\nabla ^2f(x_k)+\delta _j||F(x_k)||^{\tau})<\kappa ||F(x_k)||^{\tau}$}{j:=j+1}
   ~~~$A_k:=\nabla ^2f(x_k)+\delta _j||F(x_k)||^{\tau}\times Id$}
  }
$v_k:=A_k^{-1}H(x_k)^{\intercal}F(x_k)=pr_{A_k,+}v_k+pr_{A_k,-}v_k$\\
 $w_k:=pr_{A_k,+}v_k-pr_{A_k,-}v_k$\\
$\widehat{w_k}:=w_k/\max \{1,||w_k||\}$\\
$\gamma :=1$\\
 \If{$H(x_k)^{\intercal}F(x_k)\neq 0$}{
   \While{$f(x_k-\gamma \widehat{w_k})-f(x_k)>-\gamma <\widehat{w_k},H(x_k)^{\intercal}F(x_k)>$}{$\gamma =\gamma /2$}}

$x_{k+1}:=x_k-\gamma \widehat{w_k}$
   }
  \caption{New Q-Newton's method Backtracking SE} \label{table:alg0}
\end{algorithm}
}
\medskip

Other variants, as in \cite{truong2021}, can be given. See the item "A general second order algorithm" below. Versions on settings different from Euclidean space can also be given, following the ideas in \cite{truong4, truongnew}. 

A well-known algorithm  for solving systems of equations is Levenberg-Marquart algorithm \cite{wikipedia} (and modifications). We propose the following modification. It is simpler than the above version, thanks to the fact that $H(x)^{\intercal}H(x)$ is symmetric and semi-positive. 

\medskip
{\color{blue}
 \begin{algorithm}[H]
\SetAlgoLined
\KwResult{Find a zero of $F:\mathbb{R}^m\rightarrow \mathbb{R}^m$}
Given: $0<\delta_0,\delta_1$  and $0<\tau $;\\
Define: $H(x):=JF(x)$ and $f(x)=||F(x)||^2$;\\
Define: $\kappa :=|\delta _1-\delta _0|/2$;\\
Initialization: $x_0\in \mathbb{R}^m$\;
 \For{$k=0,1,2\ldots$}{ 
    $j=0$\\
  \If{$F(x_k)\neq 0$}{
  \If{$minsp(H(x_k)^{\intercal}H(x_k))>||F(x_k)||^{\tau}$}
   {$A_k:=H(x_k)^{\intercal}H(x_k)+\delta _0||F(x_k)||\times Id$\\
  {\bf else}\\
   $A_k:=H(x_k)^{\intercal}H(x_k)+\delta _1||F(x_k)||^{\tau}\times Id$}
  }
 $w_k:=A_k^{-1}H(x_k)^{\intercal}F(x_k)$\\
$\widehat{w_k}:=w_k/\max \{1,||w_k||\}$\\
$\gamma :=1$\\
 \If{$H(x_k)^{\intercal}F(x_k)\neq 0$}{
   \While{$f(x_k-\gamma \widehat{w_k})-f(x_k)>-\gamma <\widehat{w_k},H(x_k)^{\intercal}F(x_k)>$}{$\gamma =\gamma /2$}}

$x_{k+1}:=x_k-\gamma \widehat{w_k}$
   }
  \caption{Levenberg-Marquardt Backtracking M} \label{table:alg1}
\end{algorithm}
}
\medskip

We have the following result. Note that by definition, for the $A_k$ in either algorithm, we have $minsp(A_k)\geq ||F(x_k)||^{\tau }$, and near a non-degenerate root of $F(x)$ we have $A_k=H(x_k)^{\intercal}H(x_k)+O(||F(x_k)||)$. 

\begin{theorem}

Let $F:\mathbb{R}^m\rightarrow \mathbb{R}^m$ be a $C^1$ function. Define $H(x)=JF(x)$ and $f(x)=||F(x)||^2$. Let $x_0\in \mathbb{R}^m$ be an initial point, and $\{x_n\}$ the corresponding constructed sequence from New Q-Newton's method Backtracking SE or Levenberg-Marquardt M.  

0) (Descent property) $f(x_{n+1})\leq f(x_n)$ for all n. 

1) If $x_{\infty}$ is a {\bf cluster point} of $\{x_n\}$, then $H(x_{\infty})^{\intercal}F(x_{\infty})=0$. That is, $x_{\infty}$ is a {\bf critical point} of $f$.

2) If $0<\tau <1$: If $f$ has at most countably many critical points, then either $\lim _{n\rightarrow\infty}||x_n||=\infty$ or $\{x_n\}$ converges to a critical point of $f$. Moreover, if $f$ has compact sublevels, then only the second alternative happens. 

3) If $x_n$ converges to $x_{\infty}$ which is a non-degenerate zero  of $F$ (that is, if $H(x)$ is invertible at $x_{\infty}$), then the rate of convergence is quadratic. 

4) If $0<\tau <1$: (Capture theorem) If $x_{\infty}'$ is an isolated zero of $F$, then for initial points $x_0'$ close enough to $x_{\infty}'$, the sequence $\{x_n'\}$  constructed by New Q-Newton's method Backtracking SE will converge to $x_{\infty}'$. 

\label{Theorem1}\end{theorem}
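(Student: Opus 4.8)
The plan is to handle both algorithms at once, writing $g(x):=H(x)^{\intercal}F(x)=\tfrac12\nabla f(x)$ and, for each $k$, $A_k$ for the operator the algorithm produces; recall that $minsp(A_k)\ge\|F(x_k)\|^{\tau}$, that for New Q-Newton's method Backtracking SE the inner $j$-while-loop ends with $j\le m$ by the usual pigeonhole argument (there are $m+1$ shifts $\delta_0,\dots,\delta_m$ and at most $m$ eigenvalues of $\nabla^2 f(x_k)$, and $\kappa$ is half the minimal gap), and that the Armijo while-loop always ends after finitely many halvings (\cite{truong2021}). I may assume $F(x_k)\neq0$ for all $k$, since once $F(x_k)=0$ the iteration is constant.

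\smallskip
\emph{Parts 0) and 1).} First I would verify that $\widehat{w_k}$ is a genuine descent direction for $f$. Decomposing $v_k=pr_{A_k,+}v_k+pr_{A_k,-}v_k=:p+q$ we get $g(x_k)=A_kv_k=A_kp+A_kq$ with $A_kp\in V^{+}$, $A_kq\in V^{-}$, so, using $V^{+}\perp V^{-}$, $\langle w_k,g(x_k)\rangle=\langle p-q,\,A_kp+A_kq\rangle=\langle p,A_kp\rangle-\langle q,A_kq\rangle\ge minsp(A_k)\,(\|p\|^{2}+\|q\|^{2})=minsp(A_k)\,\|v_k\|^{2}$ (for Levenberg--Marquardt M, $A_k$ is symmetric positive definite and this is immediate). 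Since $\|w_k\|=\|v_k\|$ and $\widehat{w_k}=w_k/\max\{1,\|w_k\|\}$, this gives $\langle\widehat{w_k},g(x_k)\rangle\ge minsp(A_k)\,\|\widehat{w_k}\|^{2}>0$ whenever $g(x_k)\neq0$, whence $f(x_{k+1})\le f(x_k)$ (with equality when $g(x_k)=0$). For 1), suppose a cluster point $x_\infty$ has $g(x_\infty)\neq0$; then $F(x_\infty)\neq0$, so on a neighbourhood of $x_\infty$ all of $\|F\|,\|F\|^{\tau},minsp(A_k),\|A_k\|$ are pinched between two positive constants. Combining the inequality above with $\|g(x_k)\|=\|A_kv_k\|\le\|A_k\|\,\|v_k\|$ yields a uniform angle condition $\langle\widehat{w_k},g(x_k)\rangle\ge\tfrac{minsp(A_k)}{\|A_k\|}\,\|\widehat{w_k}\|\,\|g(x_k)\|$ and a uniform lower bound $\|\widehat{w_k}\|=\min\{\|v_k\|,1\}\ge\min\{\|g(x_k)\|/\|A_k\|,1\}$; the standard backtracking estimate of \cite{truong2021,truong-etal} then makes $f(x_{n_k})-f(x_{n_k+1})$ bounded below by a positive constant along a subsequence $x_{n_k}\to x_\infty$, contradicting the convergence of the decreasing sequence $(f(x_n))$. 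Hence $g(x_\infty)=0$.

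\smallskip
\emph{Part 2).} From the stop condition, $f(x_k)-f(x_{k+1})\ge\gamma_k\langle\widehat{w_k},g(x_k)\rangle\ge\gamma_k\,minsp(A_k)\,\|\widehat{w_k}\|^{2}$, and since $\gamma_k\le1$, $\|x_{k+1}-x_k\|^{2}=\gamma_k^{2}\|\widehat{w_k}\|^{2}\le (f(x_k)-f(x_{k+1}))/minsp(A_k)\le(f(x_k)-f(x_{k+1}))/f(x_k)^{\tau/2}$. As $t\mapsto t^{-\tau/2}$ is decreasing and $\tau/2<1$, summing telescopically gives $\sum_k\|x_{k+1}-x_k\|^{2}\le\int_{0}^{f(x_0)}t^{-\tau/2}\,dt<\infty$, so $\|x_{k+1}-x_k\|\to0$. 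Now if $\lim_n\|x_n\|=\infty$ fails, $(x_n)$ has a bounded subsequence, hence a nonempty cluster set $\mathcal C$; since consecutive steps tend to $0$, $\mathcal C$ is connected (a standard fact about sequences with $\|x_{k+1}-x_k\|\to0$); by 1), $\mathcal C$ lies in the set of critical points of $f$, which is countable by hypothesis, and a connected countable subset of $\mathbb R^{m}$ is a single point $x_\infty$; together with $\|x_{k+1}-x_k\|\to0$ this forces $x_n\to x_\infty$ (otherwise an excursion out of a fixed ball about $x_\infty$ would, by the small-step property, manufacture a second cluster point). Finally, if $f$ has compact sublevel sets then $(x_n)\subseteq\{f\le f(x_0)\}$ is bounded, ruling out the first alternative.

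\smallskip
\emph{Part 3).} Suppose $x_n\to x_\infty$ with $H(x_\infty)$ invertible, so $H(x_\infty)^{\intercal}H(x_\infty)$ is positive definite. By the Remark preceding the theorem, for large $n$ the algorithm takes the first branch without any halving, $V^{-}=\{0\}$, $w_n=v_n$, and $A_n=H(x_n)^{\intercal}H(x_n)+O(\|F(x_n)\|)$ is uniformly positive definite and bounded; hence $v_n=A_n^{-1}g(x_n)=H(x_n)^{-1}F(x_n)+O(\|F(x_n)\|^{2})\to0$, so eventually $\widehat{w_n}=v_n$. A one-term Taylor expansion of $f(x_n-\gamma\widehat{w_n})$ shows the Armijo test holds at $\gamma=1$ for $n$ large (the quadratic term is dominated by the linear one), so $x_{n+1}=x_n-H(x_n)^{-1}F(x_n)+O(\|F(x_n)\|^{2})$, i.e.\ Newton's iteration up to higher order, and the classical estimate gives $\|x_{n+1}-x_\infty\|=O(\|x_n-x_\infty\|^{2})$.

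\smallskip
\emph{Part 4).} Fix $r>0$ so small that $\overline B:=\overline{B(x_\infty',r)}$ contains no zero of $F$ other than $x_\infty'$. On $\overline B$, $\|H\|$ is bounded, so while the orbit stays in $\overline B$ one has $\|x_{n+1}'-x_n'\|\le\|w_n\|\le\|g(x_n')\|/minsp(A_n)\le\|H(x_n')\|\,\|F(x_n')\|^{1-\tau}$, which is small precisely because $\tau<1$ and $f(x_n')^{1/2}=\|F(x_n')\|$ decreases. The plan is: (i) trap the orbit in $\overline B$ once $x_0'$ is close enough — combine the per-step bound with the energy estimate $\sum_k\|x_{k+1}'-x_k'\|^{2}\le O(f(x_0')^{1-\tau/2})$ of Part 2; near a \emph{non-degenerate} zero, a Polyak--\L{}ojasiewicz inequality $\|\nabla f\|^{2}\ge c\,f$ holds and forces at least geometric (in fact, by Part 3, quadratic) decay of $f(x_n')$, whence $\sum_n\|x_{n+1}'-x_n'\|\lesssim f(x_0')^{(1-\tau)/2}$ and the orbit cannot leave $\overline B$; (ii) being bounded, $(x_n')$ has cluster points, all in $\overline B$ and all critical points of $f$ by Part 1; (iii) exclude cluster at any critical point $y$ of $f$ with $F(y)\neq0$ — those that are saddle points or local maxima of $f$ are removed by the avoidance-of-saddle-points property of New Q-Newton's method Backtracking SE, and a small $r$ removes spurious local \emph{minima} of $\|F\|^{2}$; then every cluster point is a zero of $F$ in $\overline B$, so equals $x_\infty'$, and by compactness $x_n'\to x_\infty'$. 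I expect step (iii) together with the trapping in the \emph{degenerate} case to be the main obstacle: when $H(x_\infty')$ is singular the P--\L{} inequality may fail, $f$ need only decay polynomially, and spurious local minima of $\|F\|^{2}$ could accumulate at $x_\infty'$; handling this cleanly seems to require an extra hypothesis (non-degeneracy of the zero, or real-analyticity of $F$), which is where I anticipate the real work.
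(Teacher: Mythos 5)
Your overall route is the same as the paper's: the paper proves Parts 0--2 by citing the arguments of \cite{truong2021} verbatim (descent direction via the spectral decomposition of $A_k$ and the lower bound $minsp(A_k)\gtrsim\|F(x_k)\|^{\tau}$, then the telescoping sum and connectedness of the cluster set), Part 3 by observing the update is Newton's method up to $O(\|F(x_k)\|)$, and Part 4 by invoking the standard capture theorem. Your write-up fills in those details correctly for Parts 0, 1 and 2; in particular the estimate $\langle w_k,H(x_k)^{\intercal}F(x_k)\rangle\ge minsp(A_k)\|v_k\|^2$ and the comparison of $\sum_k (f(x_k)-f(x_{k+1}))f(x_k)^{-\tau/2}$ with $\int_0^{f(x_0)}t^{-\tau/2}\,dt$ are exactly the intended mechanism, and the requirement $\tau<1$ enters exactly where you use it.

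There is, however, a genuine soft spot in your Part 3. You assert that ``the quadratic term is dominated by the linear one,'' so Armijo accepts $\gamma=1$. It is not dominated: the acceptance test $f(x-w)-f(x)\le-\langle w,H^{\intercal}F\rangle$ is Armijo with coefficient $1/2$ relative to $\nabla f=2H^{\intercal}F$, and for the (near-)Newton step $w\approx H^{-1}F$ near a non-degenerate zero one computes $f(x-w)-f(x)=-\|F\|^2+O(\|F\|^3)$ while $-\langle w,H^{\intercal}F\rangle=-\|F\|^2+O(\|F\|^3)$; the two sides agree to leading order, so whether $\gamma=1$ is accepted is decided by the sign of third-order terms and your one-term Taylor expansion does not settle it. (If $\gamma=1$ is rejected the step is halved and the rate degrades to linear.) A related point: for the SE algorithm $A_k\approx\nabla^2f(x_k)=2H(x_k)^{\intercal}H(x_k)+O(\|F(x_k)\|)$, so $v_k\approx\tfrac12H(x_k)^{-1}F(x_k)$ is a \emph{half}-Newton step; your identity $v_n=H(x_n)^{-1}F(x_n)+O(\|F(x_n)\|^2)$ holds for Levenberg--Marquardt M but not for SE. (This factor of $2$ is present in the paper's own remark preceding the theorem, so it is as much an issue with the source as with your proof, but you should not reproduce it uncritically.) Finally, your reservation about Part 4 in the degenerate case is well taken: an isolated zero of $F$ need not be an isolated critical point of $f=\|F\|^2$, and the classical capture theorem the paper appeals to requires the latter (or at least that no other critical values interfere); your trapping estimate $\|w_k\|\le\|H\|\,\|F(x_k)\|^{1-\tau}$ is the right tool for confinement, but convergence to $x_{\infty}'$ itself, rather than to a nearby spurious critical point, is not established by either your argument or the paper's one-line proof.
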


\begin{proof}

Parts 0, 1 and 2 follow exactly as in \cite{truong2021} for New Q-Newton's Backtracking. 

Part 3: this follows because near $x_{\infty}$ then the update rule for either algorithm is different from the usual Newton's method only in $O(||F(x_k)||)$, which will still has quadratic rate of convergence. 

Part 4: this is well known for optimization algorithms having the descent property in part 0.

\end{proof}

Next, we discuss the avoidance of saddle point. Recall that a point $x^*$ is a generalised saddle point of a function $f$ if $\nabla f(x^*)=0$, and moreover $\nabla ^2f(x^*)$ has at least one negative eigenvalue. Note that if $x^*$ is a generalised saddle point of $f(x)=||F(x)||^2$, then $x^*$ cannot be a root of $F(x)=0$. In particular, since $\nabla f(x)/2=H(x)^{\intercal}F(x)$, we have that $H(x^*)$ is singular, and hence $A(x)=H(x)^{\intercal}H(x)+\delta _1||F(x)||^{\tau}$ near $x^*$. 

We consider the Levenberg-Marquardt Backtracking M algorithm first, which is more complicated to deal with. The main reason is that the main term $H(x)^{\intercal}H(x)$ is not the same as $\nabla ^2f(x)$. Note that the dynamics of Levenberg-Marquardt Backtracking M is $x\mapsto G(x)=x-\gamma (x)(H(x)^{\intercal}H(x)+\delta _1||F(x)||^{\tau})^{-1}.\nabla f(x)/2$, and it is easy to compute that $G(x)=(H(x)^{\intercal}H(x)+\delta _1||F(x)||^{\tau})^{-1}.\nabla f(x)/2$ is $C^1$ near $x^*$, and $\nabla G(x^*)=(H(x^*)^{\intercal}H(x^*)+\delta _1||F(x^*)||^{\tau})^{-1}.\nabla ^2f(x^*)/2$. Since  $H(x^*)^{\intercal}H(x^*)+\delta _1||F(x^*)||^{\tau}$ is strictly positive definite near $x^*$, it follows that the term $(H(x^*)^{\intercal}H(x^*)+\delta _1||F(x^*)||^{\tau})^{-1}.\nabla ^2f(x^*)/2$ also has at least one negative eigenvalue. Therefore, if $\gamma (x)$ varies $C^1$ near $x^*$ (for example, if it is a constant), then one has a local Stable-Central manifold for $x^*$ for the dynamics of Levenberg-Marquardt Backtracking M, and then can use the results in \cite{truong2021} to show (when $\delta _0, \delta _1$ are randomly chosen from beginning) that Levenberg-Marquardt Backtracking M can globally avoid $x^*$. 

There is one way to make $\gamma(x)$ to be constant near $x^*$, that is to choose $\gamma (x)$ by a more sophisticated manner. The main idea in \cite{truong4, truongnew}, used for Backtracking line search for gradient descent, is to find a continuous quantity $R(x)$, so that if $\gamma <R(x)$ then Armijo's condition is satisfied. Then, if one chooses $\gamma (x)$ in a sequence $\{\beta ^n: ~n=0,1,2,\ldots \}$, where $0<\beta <1$ is randomly chosen, then one can make sure (in case $x^*$ is an isolated saddle point) that $\gamma (x)$ is constant near $x^*$. In the case at hand, such an $R(x)$ can be bounded from the quantities involving $F(x)$ at $x^*$. The case of non-isolated saddle points can be treated similarly, by using Lindelof's lemma (which is first used in \cite{panageas-piliouras} for the usual Gradient descent method). Therefore, we have the following result. 

\begin{theorem}
If $F:\mathbb{R}^m\rightarrow \mathbb{R}^m$ is $C^2$, and $f=||F||^2$, then by choosing $\delta _0,\delta _1$ randomly, as well as choosing the learning rate $\gamma (x)$ in a sequence $\{\beta ^n: ~n=0,1,2,\ldots \}$ (where $0<\beta <1$ is randomly chosen) by the manner in \cite{truong4, truongnew}, then for a random initial point the sequence $\{x_n\}$ constructed by Levenberg-Marquardt Backtracking M cannot converge to a generalised saddle point. 

\label{Theorem4}\end{theorem}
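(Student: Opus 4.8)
The plan is to fill in the centre--stable manifold argument sketched in the paragraph preceding the statement, the only nontrivial point being the regularity of the backtracking step size; everything else is a routine adaptation of the saddle-avoidance proof for New Q-Newton's method Backtracking in \cite{truong2021} (itself modelled on \cite{panageas-piliouras}). \emph{Step 1 (localisation).} Fix a generalised saddle point $x^*$ of $f=||F||^2$. Since $x^*$ cannot be a zero of $F$ we have $||F(x^*)||>0$, and $H(x^*)=JF(x^*)$ is singular: if it were invertible, $H^{\intercal}F=\nabla f/2=0$ would force $F(x^*)=0$. Hence $minsp(H(x^*)^{\intercal}H(x^*))=0<||F(x^*)||^{\tau}$, and on a small enough ball $U$ around $x^*$ the algorithm always selects the branch $A(x)=H(x)^{\intercal}H(x)+\delta_1||F(x)||^{\tau}\,Id$. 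Shrinking $U$, the map $A(x)$ is $C^1$ (because $F$ is $C^2$ and $t\mapsto||t||^{\tau}$ is smooth away from $0$) and uniformly strictly positive definite on $U$; moreover $w(x)=A(x)^{-1}H(x)^{\intercal}F(x)=A(x)^{-1}\nabla f(x)/2\to 0$ as $x\to x^*$, so $\widehat{w}(x)=w(x)$ on $U$ and $\widehat{w}$ is $C^1$ there.

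\emph{Step 2 (an expanding eigenvalue of the linearisation).} Granting for now that the step size is constant, $\gamma(x)\equiv\gamma^*$, on $U$ (arranged in Step 4), the iteration map is $G(x)=x-\gamma^*A(x)^{-1}\nabla f(x)/2$, which is $C^1$ on $U$, and since $\nabla f(x^*)=0$ the product rule gives $\nabla G(x^*)=Id-\gamma^*A(x^*)^{-1}\nabla^2 f(x^*)/2$. Put $P=A(x^*)$, symmetric positive definite, and $S=\nabla^2 f(x^*)/2$, symmetric with at least one negative eigenvalue (definition of a generalised saddle). Then $P^{-1}S$ is similar to the symmetric matrix $P^{-1/2}SP^{-1/2}$, which has the same inertia as $S$ by Sylvester's law; hence $P^{-1}S$ has an eigenvalue $\lambda<0$, so $\nabla G(x^*)$ has the eigenvalue $1-\gamma^*\lambda>1$. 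Thus $x^*$ is an unstable fixed point with $\dim E^s(x^*)+\dim E^c(x^*)\le m-1$. (This is the only place where the argument genuinely differs from \cite{truong2021}: the dominant term of $A$ is $H^{\intercal}H$ rather than $\nabla^2 f$, but the inertia argument shows the offending eigenvalue keeps its sign.) Since $\gamma^*\le 1$ and $||\widehat{w}||$ is small on $U$, $\nabla G$ is invertible on $U$ and $G$ is a local $C^1$ diffeomorphism.

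\emph{Step 3 (globalisation).} By the centre--stable manifold theorem, as applied in \cite{truong2021} after \cite{panageas-piliouras}, the set of points of $U$ whose forward $G$-orbit remains in $U$ and converges to $x^*$ lies in an embedded $C^1$ submanifold of codimension $\ge 1$, hence is Lebesgue-null. Pulling this back along the (non-singular) dynamics shows the set of initial points converging to $x^*$ is null; covering the set of all generalised saddle points --- possibly unbounded and non-discrete --- by countably many such neighbourhoods via Lindel\"of's lemma and taking a union, the set of initial points whose orbit converges to \emph{some} generalised saddle point is null. The random choice of $\delta_0,\delta_1$ (and of $\beta$) serves, exactly as in \cite{truong2021}, to avoid the measure-zero set of parameters for which these manifolds or the genericity conditions degenerate.

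\emph{Step 4 (the main obstacle: making $\gamma$ locally constant).} The backtracking rule defines $\gamma(x)$ a priori only as a measurable function, whereas Steps 2--3 require $G$ to be $C^1$. Following \cite{truong4, truongnew}, one exhibits a \emph{continuous} $R(x)>0$ on $U$ such that Armijo's condition $f(x-\gamma\widehat{w}(x))-f(x)\le-\gamma\langle\widehat{w}(x),H(x)^{\intercal}F(x)\rangle$ holds for every $\gamma\in(0,R(x)]$; here $R$ is built from a local bound on the modulus of continuity of $\nabla f$ together with the uniform positive definiteness of $A$ on $U$, and is bounded below on $U$ by an explicit quantity in terms of $F$ and its first two derivatives at $x^*$. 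If the step size is then chosen from the geometric sequence $\{\beta^n:n\ge 0\}$ with $0<\beta<1$ random, backtracking makes $\gamma(x)$ take values in the finite set $\{\beta^0,\dots,\beta^{n_0}\}$ with $\beta^{n_0}\le\inf_U R$; shrinking $U$ when $x^*$ is isolated forces $\gamma(x)$ to be genuinely constant on $U$, and the non-isolated case is folded into the Lindel\"of covering of Step 3. I expect this reconciliation of a discontinuous line search with the smoothness demanded by the stable manifold theorem to be the crux of the proof; once it is in place, Steps 1--3 run exactly as in \cite{truong2021}.
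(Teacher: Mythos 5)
Your argument matches the sketch the paper gives in the paragraph preceding the theorem: localise to the branch $A(x)=H(x)^{\intercal}H(x)+\delta_1||F(x)||^{\tau}\cdot Id$, linearise the update map, extract a negative eigenvalue of $A(x^*)^{-1}\nabla^2 f(x^*)$ via an inertia argument, apply the stable--centre manifold theorem with a Lindel\"of covering, and enforce local constancy of $\gamma$ via the step-size scheme of \cite{truong4, truongnew}. One small slip: in Step 2 the local invertibility of $\nabla G$ does not follow from $||\widehat{w}||$ being small on $U$ (the Jacobian $\nabla w(x^*)=A(x^*)^{-1}\nabla^2 f(x^*)/2$ is a fixed matrix regardless of how small $w$ itself is), but rather from the random choice of $\delta_1$ and $\beta$ ensuring $1/\gamma^*$ is not an eigenvalue of $\nabla w(x^*)$ --- a genericity you do invoke at the end of Step 3.
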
   

However, it is still preferable to show avoidance of saddle points for the original version of Levenberg-Marquardt Backtracking M here, since it is simpler. It is expected that settling this question - at least locally near the saddle point - is the same as settling the question of whether the original version of Armijo's Backtracking line search for Gradient descent can avoid saddle points, and the latter question is still open. (As mentioned, a more sophisticated choice of learning rate for Backtracking line search for Gradient descent can avoid saddle points, see \cite{truong4, truongnew}.) Here, by using the ideas in \cite{truong2021}, where New Q-Newton's method Backtracking is shown to avoid saddle points, we can show that Levenberg-Marquardt Backtracking M can avoid saddle points of a special type, which is described next.

Again, let $f(x)=||F(x)||^2$ and $x^*$ a saddle point of $f$. Then $\nabla ^2f(x)/2 =H(x)^{\intercal}H(x)+\nabla ^2F(x).F(x)$ has at least $1$ negative eigenvalue. The special generalised saddle points we concern are: 

{\bf Strong generalised saddle points:} $x^*$ is a strong generalised saddle point of $f(x)=F(x)^2$ if it is a generalised saddle point of $f$, and moreover $\nabla ^2F(x^*)F(x^*)$ is negative definite.

\begin{theorem}
Let $F:\mathbb{R}^m\rightarrow \mathbb{R}^m$ be $C^2$ and $f(x)=||F(x)||^2$. Assume that $\delta _0,\delta _1$ are chosen randomly. If $x_0$ is a random initial point, and $\{x_n\}$ is the sequence  constructed from Levenberg-Marquardt Backtracking M, then $\{x_n\}$ cannot converge to a strong generalised saddle point $x^*$ of $f$.  

\label{Theorem5}\end{theorem}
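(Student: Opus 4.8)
The plan is to reduce everything to a local analysis near $x^*$ and then feed the local picture into the Center--Stable Manifold argument of \cite{truong2021}, exactly as there; the one genuinely new ingredient is to check that, near a \emph{strong} generalised saddle point, the backtracking line search always accepts the unit step, so that the iteration is locally one honest $C^1$ map. For the localisation: since $x^*$ is a generalised saddle point of $f=\|F\|^2\ge 0$ it is not a zero of $F$ (as already noted in the text), so $\|F(x^*)\|^\tau>0$; and $H(x^*)^\intercal F(x^*)=\nabla f(x^*)/2=0$ with $F(x^*)\ne 0$ forces $H(x^*)$ to be singular, i.e.\ $minsp(H(x^*)^\intercal H(x^*))=0<\|F(x^*)\|^\tau$. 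By continuity this strict inequality persists on a neighbourhood $U$ of $x^*$, so on $U$ the algorithm is in the ``else'' branch with $A(x)=H(x)^\intercal H(x)+\delta_1\|F(x)\|^\tau\,Id$, which, shrinking $U$, is symmetric positive definite. Also $w(x):=A(x)^{-1}H(x)^\intercal F(x)\to A(x^*)^{-1}\cdot 0=0$ as $x\to x^*$, so on $U$ one has $\widehat{w}(x)=w(x)$.

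Next, the unit step is accepted, and this is where ``strong'' enters. For $x\in U$ a Taylor expansion gives $f(x-w(x))-f(x)=-\langle\nabla f(x),w(x)\rangle+\tfrac12 w(x)^\intercal\nabla^2 f(\xi)w(x)$ for some $\xi\in[x-w(x),x]\subset U$; substituting $\nabla f(x)/2=A(x)w(x)$, Armijo's test for $\gamma=1$, namely $f(x-w(x))-f(x)\le-\langle w(x),\nabla f(x)/2\rangle$, rearranges to $w(x)^\intercal\bigl(A(x)-\tfrac12\nabla^2 f(\xi)\bigr)w(x)\ge 0$. Since $\nabla^2 f(x)/2=H(x)^\intercal H(x)+\nabla^2 F(x)F(x)$, at $x^*$ the matrix $A(x^*)-\tfrac12\nabla^2 f(x^*)=\delta_1\|F(x^*)\|^\tau\,Id-\nabla^2 F(x^*)F(x^*)$ is positive definite: $\delta_1\|F(x^*)\|^\tau\,Id\succ0$ and, by the \emph{strong} saddle hypothesis, $-\nabla^2 F(x^*)F(x^*)\succ0$. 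Being a strict inequality it survives on a smaller neighbourhood $U'$, so the quadratic form above is $\ge 0$ there and $\gamma\equiv1$ on $U'$. Hence on $U'$ the iteration is $x_{k+1}=G(x_k)$ with $G(x)=x-A(x)^{-1}\nabla f(x)/2$, a $C^1$ map (here $F\in C^2$ is used: then $H$, $\nabla f$ and $\|F\|^\tau=f^{\tau/2}$ are $C^1$ near $x^*$); note $G$ also reproduces the iteration at points of $U'$ where $\nabla f$ vanishes, as $G$ is the identity there.

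Then one linearises and globalises. Because $\nabla f(x^*)=0$, $\nabla G(x^*)=Id-A(x^*)^{-1}B$ with $B=\nabla^2 f(x^*)/2$. As $A(x^*)\succ0$, $A(x^*)^{-1}B$ is similar to the symmetric matrix $A(x^*)^{-1/2}BA(x^*)^{-1/2}$, which by Sylvester's law of inertia has the same number of negative eigenvalues as $B$; since $x^*$ is a generalised saddle, $B$ has a negative eigenvalue $\mu<0$, so $\nabla G(x^*)$ has the eigenvalue $1-\mu>1$. Moreover $\det\nabla G(x^*)=\det A(x^*)^{-1}\cdot\det\bigl(\delta_1\|F(x^*)\|^\tau\,Id-\nabla^2 F(x^*)F(x^*)\bigr)>0$ by the same positive-definiteness used above, so $G$ is a local $C^1$ diffeomorphism at $x^*$ with a linearly unstable direction. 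From here the Center--Stable Manifold Theorem produces a $C^1$ submanifold $W^{cs}_{loc}(x^*)\subset U'$ of dimension $\le m-1$, hence Lebesgue null, containing every point whose forward $G$-orbit stays in $U'$; if $x_n\to x^*$ then $x_n\in U'$ and $x_{n+1}=G(x_n)$ for all large $n$, so $x_N\in W^{cs}_{loc}(x^*)$; and then the set of initial points $x_0$ with $x_n\to x^*$ lies in a countable union of preimages of this null set under the iterated dynamics, which is again null --- this pull-back through the (only piecewise-$C^1$) backtracking map being where the random choice of $\delta_0,\delta_1$ and of $x_0$ is invoked, exactly as in \cite{truong2021} (cf.\ \cite{panageas-piliouras}).

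I expect the crux --- and the only place the proof could go wrong --- to be the unit-step verification: for a general generalised saddle point the backtracking step size need not be locally constant (this is essentially the open problem noted above for Armijo backtracking for gradient descent), so the iteration near $x^*$ would not be a single $C^1$ map and the manifold argument would have no foothold. It is precisely the \emph{strict} negative definiteness of $\nabla^2 F(x^*)F(x^*)$ that supplies uniform slack in Armijo's inequality and forces $\gamma\equiv 1$ on a neighbourhood; everything else is elementary linear algebra or a direct citation. A secondary, technical point, inherited from \cite{truong2021}, is to make the final null-set pull-back fully rigorous despite the global backtracking map being only piecewise $C^1$.
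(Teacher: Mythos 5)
Your proof is correct and follows essentially the same route as the paper: localise to the ``else'' branch near $x^*$, use a Taylor expansion together with the strict negative definiteness of $\nabla^2F(x^*)F(x^*)$ to force acceptance of the unit step $\gamma\equiv 1$, and then invoke the centre--stable manifold machinery of \cite{truong2021}. The paper's proof consists only of the $\gamma\equiv 1$ computation (delegating the rest to the discussion before Theorem \ref{Theorem4}); you have merely written that computation as a positive-definiteness statement for $A(x)-\tfrac12\nabla^2f(\xi)$ and spelled out the linearisation and null-set pull-back in more detail.
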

\begin{proof}
As mentioned before the statement of Theorem \ref{Theorem4}, it suffices to show that for $x$ close to $x^*$, then the learning rate $\gamma (x)=1$.  Denote, as usual,  $w(x)=(H(x)^{\intercal}H(x)+\delta _1||F(x)||^{\tau})^{-1}.H(x)^{\intercal}.F(x)$. Note that $||w(x)||\sim ||H(x)^{\intercal}.F(x)||$ Then, by Taylor's expansion we have
\begin{eqnarray*}
f(x-w(x))-f(x)&=&-2<w(x),H(x)^{\intercal}.F(x)>\\
&&+<[H(x)^{\intercal}H(x)+\nabla ^2F(x).F(x)].w(x),w(x)>+o(||w(x)||^2)
\end{eqnarray*}
 
Since $\nabla ^2F(x).F(x)$ is negative definite when $x$ is close to $x^*$, and since $\delta _1>0$, we obtain 
\begin{eqnarray*}
<[H(x)^{\intercal}H(x)+\nabla ^2F(x).F(x)].w(x),w(x)>&\leq&<[H(x)^{\intercal}H(x)+\delta _1||F(x)||^{\tau }]w(x), w(x)>\\
&=&<H(x)^{\intercal}.F(x),w(x)>.  
\end{eqnarray*}

Hence, for $x$ near $x^*$, we have as wanted: $f(x-w(x))-f(x)\leq - 1/3<w(x),H(x)^{\intercal}.F(x)>$. 
\end{proof}

Note that when $m=1$, then a saddle point of $f$ is also a strong generalised saddle point. On the other hand, for $m=2$, the two notions are different, already for the case $f=|g(z)|^2$ where $g$ is a univariate holomorphic function (see \cite{truong-etal}). 

Concerning avoidance of saddle points, currently New Q-Newton's method Backtracking SE has better theoretical guarantees. The proof is similar to the proof of Theorem \ref{Theorem5}.

\begin{theorem}
Let $F:\mathbb{R}^m\rightarrow \mathbb{R}^m$ be $C^2$ and $f(x)=||F(x)||^2$. Assume that $\delta _0,\delta _1$ are chosen randomly. If $x_0$ is a random initial point, and $\{x_n\}$ is the sequence  constructed from New Q-Newton's method Backtracking SE or Levenberg-Marquardt Backtracking M, then $\{x_n\}$ cannot converge to a generalised saddle point $x^*$ of $f$.  

\label{Theorem6}\end{theorem}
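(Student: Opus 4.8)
The plan is to follow the scheme laid out before Theorems \ref{Theorem4} and \ref{Theorem5}, so that everything reduces to two local statements about a neighbourhood $U$ of the generalised saddle point $x^*$ of $f=\|F\|^2$. First, I would show that for every $x\in U$ the backtracking while-loop of New Q-Newton's method Backtracking SE terminates with learning rate $\gamma(x)=1$, so that on $U$ the dynamics is the single map $T(x)=x-\widehat{w}(x)$, which is differentiable at $x^*$. Second, I would show that $\nabla T(x^*)$ has an eigenvalue of modulus strictly larger than $1$ while $T$ is a local diffeomorphism near $x^*$. Granting these, $x^*$ has a local centre--stable manifold of dimension $<m$ for $T$, and then -- exactly as in \cite{truong2021}, using Lindelöf's lemma (as in \cite{panageas-piliouras}) to reduce the possibly non-isolated locus of generalised saddle points to countably many such neighbourhoods, and the randomness of $\delta_0,\delta_1$ to secure the second statement -- the set of initial points $x_0$ with $x_n\to x^*$ is a countable union of measure-zero sets, so a random $x_0$ almost surely produces a sequence not converging to $x^*$. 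The Levenberg--Marquardt Backtracking M case I would handle by the same reduction, now appealing to the analysis around Theorems \ref{Theorem4} and \ref{Theorem5} for the step size.

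The substance is the first statement, and the point is that for New Q-Newton's method Backtracking SE it holds with no extra hypothesis on $x^*$ -- this is where the SE version beats Levenberg--Marquardt Backtracking M. Since $x^*$ is a generalised saddle point of $f$, it is not a zero of $F$, so $F(x^*)\neq 0$; after shrinking $U$ (and, for generic $\delta_i$, using the $\kappa$-separation of the $\delta_i$ to fix the if/else branch and the while-loop index used in the definition of $A_k$ throughout $U$), one has $A_k=A(x_k)=\nabla^2 f(x_k)+c(x_k)\,Id$ with $c(x)$ continuous, strictly positive and bounded below on $U$ (it equals $\delta_j\|F(x)\|$ or $\delta_j\|F(x)\|^{\tau}$). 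Writing $v(x)=A(x)^{-1}H(x)^{\intercal}F(x)=A(x)^{-1}\nabla f(x)/2$ and $w(x)=pr_{A,+}v-pr_{A,-}v$, and noting $\widehat{w}=w$ near $x^*$ because $\nabla f(x^*)=0$ forces $\|v(x)\|=\|w(x)\|\to 0$, the $A$-invariance and mutual orthogonality of $V^+$ and $V^-$ give
\[
\langle w,\nabla f/2\rangle=\langle w,Av\rangle=\langle A\,pr_{A,+}v,pr_{A,+}v\rangle+\langle(-A)\,pr_{A,-}v,pr_{A,-}v\rangle,
\]
while $\langle\nabla^2 f(x)\,w,w\rangle=\langle Aw,w\rangle-c(x)\|w\|^2=\langle A\,pr_{A,+}v,pr_{A,+}v\rangle-\langle(-A)\,pr_{A,-}v,pr_{A,-}v\rangle-c(x)\|w\|^2$. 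Plugging these into the second-order Taylor expansion of $f$ at $x$ and cancelling, the Armijo acceptance inequality $f(x-w)-f(x)\leq -\langle w,\nabla f/2\rangle$ is equivalent to
\[
\tfrac12\langle A\,pr_{A,+}v,pr_{A,+}v\rangle+\tfrac32\langle(-A)\,pr_{A,-}v,pr_{A,-}v\rangle+\tfrac12\,c(x)\|w\|^2\ \geq\ o(\|w\|^2),
\]
and since the three terms on the left are nonnegative and the last is $\geq c_0\|w\|^2$ for some $c_0>0$ on $U$ -- precisely because $F(x^*)\neq 0$ -- this holds for $x$ close to $x^*$, $x\neq x^*$. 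Hence $\gamma(x)=1$ on a neighbourhood of $x^*$. (For Levenberg--Marquardt Backtracking M the leading part of $A$ is $H^{\intercal}H$ rather than $\nabla^2 f/2$, so the same cancellation leaves $\langle\nabla^2 F(x)F(x)\,w,w\rangle$ to be absorbed, which is exactly what the strong-saddle hypothesis of Theorem \ref{Theorem5} is for; that term does not appear for the SE algorithm.)

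For the second statement, since $v(x^*)=0$ and $\nabla f(x^*)=0$, differentiating gives $\nabla v(x^*)=A(x^*)^{-1}\nabla^2 f(x^*)/2$, and since $pr_{A(x),+}-pr_{A(x),-}$ is a continuous family of symmetric involutions near $x^*$ (well defined because the while-loop forces $A(x^*)$ to have no zero eigenvalue, and of locally constant index for generic $\delta_i$), $\nabla T(x^*)=Id-(pr_{A(x^*),+}-pr_{A(x^*),-})A(x^*)^{-1}\nabla^2 f(x^*)/2=Id-|A(x^*)|^{-1}\nabla^2 f(x^*)/2$, where $|A(x^*)|$ acts as $A(x^*)$ on $V^+$ and as $-A(x^*)$ on $V^-$. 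As $|A(x^*)|^{-1}$ is symmetric positive definite, $|A(x^*)|^{-1}\nabla^2 f(x^*)/2$ is similar to a symmetric matrix congruent to $\nabla^2 f(x^*)$, hence has at least one negative eigenvalue because $x^*$ is a generalised saddle point; therefore $\nabla T(x^*)$ has an eigenvalue $>1$, and it is invertible -- so $T$ is a local diffeomorphism -- as soon as $c(x^*)$ avoids the finite set of values making $1$ an eigenvalue of $|A(x^*)|^{-1}\nabla^2 f(x^*)/2$, an almost-sure condition on $\delta_0,\delta_1$. I expect the hard part to be this second statement, and within it the regularity bookkeeping: justifying that $T$ is differentiable at $x^*$ with the stated differential and is a local diffeomorphism, despite the normalisation $\widehat{w}=w/\max\{1,\|w\|\}$, the if/else branching, and the while-loop index entering $A_k$ -- all of which must be shown locally constant or smooth near $x^*$ using continuity, $F(x^*)\neq 0$, and the random choice of the $\delta_i$ -- and then invoking the centre--stable manifold argument of \cite{truong2021}; by contrast, the computation giving $\gamma(x)=1$ is short.
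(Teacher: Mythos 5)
Your proposal is correct and follows essentially the same route as the paper, whose entire argument for this statement is that it is ``similar to the proof of Theorem~\ref{Theorem5}'': establish $\gamma(x)=1$ near $x^*$ by a second-order Taylor expansion (here using the $V^{+}/V^{-}$ decomposition and the fact that for the SE algorithm $A(x)=\nabla^2 f(x)+c(x)\,Id$ with $c(x)$ bounded below because $F(x^*)\neq 0$), and then invoke the centre--stable manifold argument of \cite{truong2021} together with Lindel\"of's lemma and the randomness of the $\delta_i$. Your parenthetical observation that for Levenberg--Marquardt Backtracking M the leftover term $\langle\nabla^2F(x)F(x)\,w,w\rangle$ is only controlled under the strong-saddle hypothesis of Theorem~\ref{Theorem5} is accurate, and in fact flags a point the paper itself leaves unaddressed in extending the unconditional claim to that algorithm.
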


Recall that a function $f$ satisfies Lojasiewicz gradient inequality at a point $x^*$ if there is a small neighbourhood $U$ of $x^*$, a constant $0<\mu <1$ and a constant $C>0$ so that for all $x,y\in U$ we have
\begin{eqnarray*}
|f(x)-f(y)|^{\mu}\leq C||\nabla f(x)||. 
\end{eqnarray*}

We then can define:  

{\bf Definition (Lojasiewicz exponent):} Assume that $f:\mathbb{R}^m\rightarrow \mathbb{R}$ has the Lojasiewicz gradient inequality near its critical points. Then at each critical point $x^*$ of $f$, we define

$\mu (x^*):=\inf \{\mu : $ there is an open neighbourhood $U$ of $x^*$ and a constant $C>0$ so that for all $x,y\in U$ we have $|f(x)-f(y)|^\mu \leq C||\nabla f(x)||\}$.  

\begin{theorem} Assume that $F:\mathbb{R}^m\rightarrow \mathbb{R}^m$ is $C^1$ so that $f=||F||^2$ satisfies the Lojasiewicz gradient inequality. Let $\{x_n\}$ be a sequence constructed by New Q-Newton's Backtracking SE. Assume also that $0<\tau < 1$. 

1) Assume that for all critical points $x^*$ of $f$, we have $ \mu (x^*)\times (1+\tau ) <1$. Then either $\lim _{n\rightarrow\infty}||x_n||=\infty$ or $\{x_n\}$ converges to a critical point of $f$. 

2) If $F$ is a polynomial map, then the condition in part 1) is satisfied, provided $\tau >0$ is small enough. 
\label{Theorem3}\end{theorem}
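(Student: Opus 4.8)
The plan is to leverage the general convergence machinery for descent methods satisfying a Lojasiewicz gradient inequality, in the spirit of the Absil--Mahony--Andrews / Kurdyka--Lojasiewicz framework, adapted to the specific update rule of New Q-Newton's method Backtracking SE. The key point is that although the algorithm's step is a modified Newton step $\widehat{w_k}$, the Armijo-type condition in the backtracking loop forces the decrease $f(x_k)-f(x_{k+1})$ to be comparable to $\gamma_k \langle \widehat{w_k}, H(x_k)^{\intercal}F(x_k)\rangle$, and by the construction of $A_k$ (with $minsp(A_k)\ge \|F(x_k)\|^{\tau}$ and $A_k$ bounded above on bounded sets) one controls $\|\widehat{w_k}\|$ from below in terms of $\|\nabla f(x_k)\| = 2\|H(x_k)^{\intercal}F(x_k)\|$ and $\|F(x_k)\| = f(x_k)^{1/2}$. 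This is exactly the place the exponent $\tau$ enters: the effective "sufficient decrease" one gets is of the form $f(x_k)-f(x_{k+1}) \gtrsim \|\nabla f(x_k)\|^2 \cdot \|F(x_k)\|^{-\tau}$ (up to the $\max\{1,\|w_k\|\}$ normalisation and the lower bound on $\gamma_k$), whereas a "plain" Lojasiewicz-descent argument wants $f(x_k)-f(x_{k+1})\gtrsim \|\nabla f(x_k)\|^2$. The factor $\|F(x_k)\|^{-\tau} = (f(x_k)-f(x^*))^{-\tau/2}$ near a critical point $x^*$ with value $f(x^*)$ is precisely why the relevant combined exponent becomes $\mu(x^*)(1+\tau)$ rather than $\mu(x^*)$.

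Concretely, for part 1) I would argue as follows. Suppose $\{x_n\}$ does not have $\|x_n\|\to\infty$; then by part 0) (descent) and part 1) of Theorem \ref{Theorem1}, $\{x_n\}$ has a cluster point $x^*$ which is a critical point of $f$, and $f(x_n)\downarrow f(x^*)$. Working in a neighbourhood $U$ of $x^*$ on which the Lojasiewicz inequality $|f(x)-f(x^*)|^{\mu} \le C\|\nabla f(x)\|$ holds for some $\mu$ with $\mu(1+\tau)<1$, the standard telescoping trick using the concave function $t\mapsto t^{1-\mu}$ bounds $\sum_n \|x_{n+1}-x_n\|$ once one knows a sufficient-decrease estimate of the form $f(x_n)-f(x_{n+1}) \ge c\,\|x_{n+1}-x_n\|\cdot\|\nabla f(x_n)\|\cdot\|F(x_n)\|^{-\tau}$ together with $\|x_{n+1}-x_n\|\ge c'\min\{1,\|\nabla f(x_n)\|\,\|F(x_n)\|^{-\tau}\}$ (both coming from the algorithm's backtracking condition and the bounds on $A_k$, $\gamma_k$). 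Feeding the Lojasiewicz bound $\|\nabla f(x_n)\|\ge C^{-1}(f(x_n)-f(x^*))^{\mu}$ and the identity $\|F(x_n)\|^{-\tau} = (f(x_n)-f(x^*))^{-\tau/2}$ into this, the exponent governing the sum is $1-\mu(1+\tau)>0$, so $\sum_n\|x_{n+1}-x_n\|<\infty$, hence $\{x_n\}$ is Cauchy and converges to $x^*$. (One should also handle the degenerate cases where $F(x_k)=0$ or $H(x_k)^{\intercal}F(x_k)=0$ at some finite stage, where the algorithm stabilises trivially, and the case where the step is not shortened, $\gamma_k=1$.)

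For part 2), when $F$ is a polynomial map, $f=\|F\|^2$ is a real polynomial, and by the classical Lojasiewicz inequality for polynomials (and the effective bounds on the Lojasiewicz exponent, e.g. via D'Acunto--Kurdyka, or simply the fact that $\mu(x^*)<1$ always and is bounded away from $1$ uniformly over the finitely-many relevant critical values and in a semialgebraic-compact region), one has $\sup_{x^*}\mu(x^*)<1$; then choosing $\tau>0$ with $\tau < (1/\sup_{x^*}\mu(x^*)) - 1$ makes $\mu(x^*)(1+\tau)<1$ for all critical points $x^*$ simultaneously, so part 1) applies. I expect the main obstacle to be the careful bookkeeping in the sufficient-decrease estimate: one must verify that the backtracking loop terminates with $\gamma_k$ bounded below by a positive quantity depending continuously (and with the right power of $\|F(x)\|$) on $x$ near $x^*$, that the normalisation $\widehat{w_k}=w_k/\max\{1,\|w_k\|\}$ does not destroy the comparison between $\|x_{n+1}-x_n\|$ and $\|\nabla f(x_n)\|\,\|F(x_n)\|^{-\tau}$, and that $minsp(A_k)$ is not merely bounded below by $\|F(x_k)\|^{\tau}$ but that $A_k$ is also bounded above near $x^*$ — both of which follow from the definition of $A_k$ (with $\delta_j$ stopping at a bounded index since $\kappa = \inf_{i\ne j}|\delta_i-\delta_j|/2$ is fixed) but need to be spelled out. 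This is the same technical core as in \cite{truong2021}; the only genuinely new ingredient is tracking the extra $\|F\|^{-\tau}$ factor and its conversion into the $(1+\tau)$ in the exponent condition.
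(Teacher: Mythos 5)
Your overall plan --- cluster-point existence from Theorem \ref{Theorem1}, a Lojasiewicz-type sufficient-decrease estimate, the concavity/telescoping trick with $t\mapsto t^{1-\alpha}$ to get $\sum\|x_{k+1}-x_k\|<\infty$, and the D'Acunto--Kurdyka effective bound for part 2) --- is exactly the strategy the paper alludes to (the paper's own ``proof'' is a one-line reference to \cite{truong2021} plus the remark that \cite{acunto-kurdyka} handles part 2). In that sense your proposal is on the right track and covers all the right sub-steps, including the caveats about the $\max\{1,\|w_k\|\}$ normalisation, the degenerate stopping cases, and the need to control both $minsp(A_k)$ from below and $\|A_k\|$ from above.

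However, the central estimate you state --- that the sufficient decrease has the form $f(x_k)-f(x_{k+1}) \gtrsim \|\nabla f(x_k)\|^2\cdot\|F(x_k)\|^{-\tau}$, with a \emph{negative} power of $\|F\|$ --- has the wrong sign, and as written it does not yield the exponent $\mu(1+\tau)$. Since $|A_k|$ has all eigenvalues in $[\kappa\|F(x_k)\|^{\tau},\,M]$ (with $M$ the local upper bound on $\|A_k\|$), the quantity $\langle w_k, H(x_k)^{\intercal}F(x_k)\rangle=\langle |A_k|^{-1}\nabla f/2,\nabla f/2\rangle$ is bounded \emph{below} by $\frac{1}{4M}\|\nabla f(x_k)\|^2$, with no $\|F\|^{-\tau}$ gain; the factor $\|F(x_k)\|^{-\tau}$ you write is an \emph{upper} bound, which is useless here. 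The $\|F\|^{\tau}$-dependence that actually matters enters with a \emph{positive} sign and makes the decrease weaker near a root of $F$: writing $\|x_{k+1}-x_k\|=\gamma_k\|w_k\|$ and using $\langle w_k,H^{\intercal}F\rangle/\|w_k\|\ge\kappa\|F(x_k)\|^{\tau}\|w_k\|\ge\frac{\kappa\|F(x_k)\|^{\tau}}{2M}\|\nabla f(x_k)\|$, one gets
\begin{equation*}
f(x_k)-f(x_{k+1})\ \ge\ c\,\|F(x_k)\|^{\tau}\,\|\nabla f(x_k)\|\,\|x_{k+1}-x_k\|.
\end{equation*}
If one now replaces $\|F(x_k)\|^{\tau}$ naively by $(f(x_k)-f_*)^{\tau/2}$ one ends up with the exponent $\mu+\tau/2$, not $\mu(1+\tau)$. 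The missing step that produces the stated exponent is the elementary inequality $\|\nabla f(x)\|=2\|H(x)^{\intercal}F(x)\|\le 2\|H(x)\|\,\|F(x)\|$, i.e.\ $\|F(x_k)\|^{\tau}\gtrsim\|\nabla f(x_k)\|^{\tau}$ locally, whence $f(x_k)-f(x_{k+1})\gtrsim\|\nabla f(x_k)\|^{1+\tau}\|x_{k+1}-x_k\|\gtrsim(f(x_k)-f_*)^{\mu(1+\tau)}\|x_{k+1}-x_k\|$; this is where $(1+\tau)$ actually comes from. Also note that your substitution $\|F(x_n)\|^{-\tau}=(f(x_n)-f(x^*))^{-\tau/2}$ is only valid when $f(x^*)=0$; when $f(x^*)>0$ the factor $\|F\|^{\pm\tau}$ is harmless and $\mu<1$ suffices, so one should split into cases. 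Finally, for part 2) I would rely on the explicit D'Acunto--Kurdyka degree bound (as the paper does) rather than an appeal to compactness over ``finitely-many relevant critical values'', since there may be a continuum of critical points.
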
  
\begin{proof}
The proof is similar to what given in \cite{truong2021} for New Q-Newton's method Backtracking, where in part 2 an effective upper bound \cite{acunto-kurdyka} for the Lojasiewicz exponent of a polynomial map (in terms of degree of the map and the dimension)  is used. 
\end{proof}

{\bf A general second order algorithm:} Here we present a general scheme for to have good theoretical guarantee second order algorithms, combining ideas in this paper and \cite{truong2021}: 

Assume that one wants to optimise a $C^2$ cost function $f:\mathbb{R}^m\rightarrow \mathbb{R}$ of the special form $f(x)=||F(x)||^2$. 

One fixes $0<\delta _0,\delta _1,\ldots ,\delta _m$ (randomly chosen) real numbers. One fixes also numbers $0<\tau <1$ and $q\geq 1$. 

At each point $x\in \mathbb{R}^m$, assume that one is given a symmetric matrix $B(x)$ (not necessarily semi-positive) and an orthonormal basis $e_1(x),\ldots ,e_m(x)$ of $\mathbb{R}^m$.  

One defines $2\kappa :=\min _{i\not= j}|\delta _i-\delta _j|$. 

If $minsp(B(x))>||F(x)||^{\tau}$, then one chooses $\delta _j$ to be the first element in $\{\delta _0,\ldots ,\delta _m\}$ so that $A(x)=B(x)+\delta _j||F(x)||\times Id$ satisfies $minsp(A(x))\geq \kappa ||F(x)||$. Otherwise, one  chooses $\delta _j$ to be the first element in $\{\delta _0,\ldots ,\delta _m\}$ so that $A(x)=B(x)+\delta _j||F(x)||^{\tau}\times Id$ satisfies $minsp(A(x))\geq \kappa ||F(x)||^{\tau}$.  

One chooses the search direction: 
\begin{eqnarray*}
w(x):=\sum _{i=1}^m\frac{<\nabla f(x),e_i(x)>}{||A(x).e_i(x)||_{q,e_1(x),\ldots ,e_m(x)}}e_i(x),  
\end{eqnarray*}
where $||A(x).e_i(x)||_{q,e_1(x),\ldots ,e_m(x)}:=[\sum _{j=1}^m|<A(x).e_i(x),e_j(x)>|^q]^{1/q}$. 

One defines $\gamma (x)$ by Armijo's Backtracking line search w.r.t. the cost function $f(x)$ and the search vector $\widehat{w}(x)=w(x)/\min \{||w(x)||,1\}$. 

Then one has the update rule $x\mapsto x-\gamma (x)\widehat{w}(x)$. 

As discussed above, the following result helps to show that the above general scheme avoids saddle points. 

\begin{theorem} Let $f(x)=||F(x)||^2$ be as above. Let $w(x)$ be chosen as in the item "A general second order algorithm" above. Assume that $q\geq 1$ is so that its H\"older's conjugate $p$ (i.e. the number $p\geq 1$ so that $p^{-1}+q^{-1}=1$, the value $+\infty$ is allowed) satisfies $m^{1/p}<4/3$. Assume moreover that for $x$ close enough to a generalised saddle point $x^*$ of $f$, we have $B(x)-\nabla ^2f(x)+O(||\nabla f(x)||)$ is positive definite. Then, for all $x$ close enough to $x^*$ we have $\gamma (x)=1$. 

\label{Theorem7}\end{theorem}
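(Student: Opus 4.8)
The plan is to show directly that, for all $x$ in a small enough neighbourhood of $x^*$, the backtracking loop accepts $\gamma=1$; once this is known, the conclusion follows from the centre--stable manifold argument of \cite{truong2021} together with the randomised step sizes of \cite{truong4, truongnew}, exactly as explained before the statement of Theorem \ref{Theorem4} and carried out in Theorems \ref{Theorem5} and \ref{Theorem6}. So fix a generalised saddle point $x^*$ of $f=\|F\|^2$. Since such an $x^*$ is not a zero of $F$ (otherwise $\nabla^2 f(x^*)$ would be positive semidefinite), the quantities $\|F(x)\|$ and $\|F(x)\|^\tau$ stay in a fixed compact subinterval of $(0,\infty)$ for $x$ near $x^*$; hence $minsp(A(x))$ is bounded below by a positive constant, $A(x)$ is invertible, and the denominators $c_i(x):=\|A(x)e_i(x)\|_{q,e_1(x),\dots,e_m(x)}$ are bounded above and below by positive constants near $x^*$ (they are comparable to $\|A(x)e_i(x)\|_2$, hence bounded below by a fixed multiple of $minsp(A(x))$ and above by $\|A(x)\|_{\mathrm{op}}$). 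Because $\nabla f(x^*)=0$ we also get $w(x)\to 0$, so $\widehat w(x)=w(x)$ near $x^*$; and, writing $g_i=\langle\nabla f(x),e_i(x)\rangle$ and $w_i=g_i/c_i$, the two identities $\langle w(x),\nabla f(x)\rangle=\sum_i g_i^2/c_i=\sum_i c_i w_i^2$ and $\|w(x)\|^2=\sum_i g_i^2/c_i^2$ show that $\langle w(x),\nabla f(x)\rangle$ is bounded below by a positive constant multiple of $\|w(x)\|^2$ near $x^*$. This last fact is what will make all the error terms below negligible.

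The analytic heart of the matter is the deterministic inequality
\[
\langle A(x)w(x),w(x)\rangle\ \le\ m^{1/p}\,\langle w(x),\nabla f(x)\rangle .
\]
To prove it, work in the orthonormal basis $e_1(x),\dots,e_m(x)$, put $a_{ij}=\langle A(x)e_i(x),e_j(x)\rangle$ (so $a_{ij}=a_{ji}$ and $c_i=(\sum_j|a_{ij}|^q)^{1/q}$), and estimate, using the symmetry of $A(x)$ to symmetrise and then AM--GM:
\[
\langle A(x)w,w\rangle=\sum_{i,j}a_{ij}w_iw_j\ \le\ \sum_{i,j}|a_{ij}|\,\frac{w_i^2+w_j^2}{2}=\sum_i w_i^2\sum_j|a_{ij}|=\sum_i w_i^2\,\|a_{i\cdot}\|_{\ell^1}\ \le\ m^{1/p}\sum_i c_i w_i^2 ,
\]
where the last step is Hölder's inequality $\|v\|_{\ell^1}\le m^{1/p}\|v\|_{\ell^q}$ on $\mathbb{R}^m$ applied to the $i$-th row of $A(x)$, and $\sum_i c_i w_i^2=\langle w(x),\nabla f(x)\rangle$. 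I would then feed this into the hypotheses on $B$: by assumption $\nabla^2 f(x)\preceq B(x)+O(\|\nabla f(x)\|)\cdot Id$ for $x$ near $x^*$, while $B(x)\preceq A(x)$ because $A(x)-B(x)=\delta_j\|F(x)\|^\tau\cdot Id$ (or $\delta_j\|F(x)\|\cdot Id$) is positive definite, so
\[
\langle\nabla^2 f(x)w(x),w(x)\rangle\ \le\ \langle A(x)w(x),w(x)\rangle+O(\|\nabla f(x)\|)\,\|w(x)\|^2\ \le\ \bigl(m^{1/p}+o(1)\bigr)\langle w(x),\nabla f(x)\rangle
\]
as $x\to x^*$, the last step absorbing the $O(\|\nabla f(x)\|)$ term using that $\|w(x)\|^2$ is comparable to $\langle w(x),\nabla f(x)\rangle$.

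Finally, Taylor's formula gives, for $x$ near $x^*$ (so $\widehat w(x)=w(x)$),
\[
f(x-w(x))-f(x)=-\langle w(x),\nabla f(x)\rangle+\tfrac12\langle\nabla^2 f(x)w(x),w(x)\rangle+o(\|w(x)\|^2),
\]
and, again using the comparison of $\|w\|^2$ with $\langle w,\nabla f\rangle$, the remainder is $o(1)\cdot\langle w(x),\nabla f(x)\rangle$. Substituting the curvature bound just obtained,
\[
f(x-w(x))-f(x)\ \le\ -\Bigl(1-\tfrac12 m^{1/p}-o(1)\Bigr)\langle w(x),\nabla f(x)\rangle .
\]
Since $m^{1/p}<4/3$, we have $1-\tfrac12 m^{1/p}>\tfrac13$, so for $x$ sufficiently close to $x^*$ the parenthesis exceeds $\tfrac13$; thus Armijo's stopping inequality is satisfied already at $\gamma=1$ with any backtracking constant $\le\tfrac13$ (this is where the threshold $4/3=2(1-\tfrac13)$ enters). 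Hence $\gamma(x)=1$ for all $x$ near $x^*$, which is the assertion.

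The step I expect to be the main obstacle is the algebraic inequality $\langle A(x)w(x),w(x)\rangle\le m^{1/p}\langle w(x),\nabla f(x)\rangle$, and especially making the constant sharp enough: in New Q-Newton's method Backtracking SE the basis $\{e_i\}$ diagonalises $A$, so one simply compares eigenvalues, whereas here $A(x)$ need not be positive definite and $\{e_i(x)\}$ is arbitrary, so the $\ell^q$-normalisation built into $w(x)$ must be used in an essential way; the constant $m^{1/p}$ coming out of the symmetrisation-plus-Hölder argument is precisely what the Armijo margin $\tfrac43$ has to dominate, which is the reason the hypothesis reads $m^{1/p}<4/3$. A secondary but indispensable point is verifying that the two lower-order contributions --- the Taylor remainder and the $O(\|\nabla f(x)\|)$ perturbation of $\nabla^2 f$ --- are genuinely negligible against $\langle w(x),\nabla f(x)\rangle$ near $x^*$, which is exactly where the non-vanishing of $F(x^*)$ is used (it keeps the column norms $c_i(x)$ away from $0$ and $\infty$).
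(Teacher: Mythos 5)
Your proposal is correct and follows essentially the same path as the paper's proof: Taylor expand, bound $\langle\nabla^2 f(x)w,w\rangle$ above by $\langle A(x)w,w\rangle$ plus lower-order terms via the hypotheses on $B(x)$ and the positivity of the shift $\delta_j\|F\|^\tau$, then estimate $\langle A(x)w,w\rangle\le m^{1/p}\langle w,\nabla f\rangle$ by symmetrizing the double sum $\sum_{i,j}a_{ij}w_iw_j$ and applying H\"older to each row. Your write-up is somewhat more careful than the paper's (explicitly checking that the $c_i$ are bounded above and below near $x^*$, that $\widehat w=w$ there, that $\|w\|^2\sim\langle w,\nabla f\rangle$, and noting the implicit Armijo constant $\le 1/3$ that the threshold $m^{1/p}<4/3$ is calibrated against), but the key inequality and its proof coincide with the paper's.
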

\begin{proof}
Since $||F(x)||$ is bounded away from 0 near a generalised saddle point $x^*$ of $f$, we have that $||w(x)||\sim ||\nabla f(x)||$ near $x^*$. Also, $<w(x),\nabla f(x)>\sim ||w(x)||^2$.   By Taylor's expansion we have
\begin{equation}
f(x-w(x))-f(x)=-<w(x),\nabla f(x)>+\frac{1}{2}<\nabla ^2f(x)w(x),w(x)>+O(||w(x)||^3). 
\label{Equation11}\end{equation}
Denote $a_i=\nabla f(x),e_i(x)$, $b_{i,j}=<A(x)e_i(x),e_j(x)>$, and $B_i=(\sum _{j=1}^m|b_{i,j}|^q)^{1/q}$. We have
\begin{eqnarray*}
<w(x),\nabla f(x)>=\sum _{i=1}^ma_i^2/B_i. 
\end{eqnarray*}

 Now we bound from above the second summand in the RHS in Equation (\ref{Equation11}), note that all $\delta _j>0$: 
\begin{eqnarray*}
<\nabla ^2f(x)w(x),w(x)>&\leq& <[B(x)+O(||w(x)||].w(x),w(x)>\\
&\leq& <A(x)w(x),w(x)>+O(||w(x)||^3)\\
&=&<\sum _{i=1}^ma_iA(x)e_i(x)/B_i,\sum _{j=1}^ma_je_j(x)/B_j>\\
&=&\sum _{i=1}^m\sum _{j=1}^ma_ia_jb_{i,j}/(B_iB_j)
\end{eqnarray*}
Now, by Cauchy-Schwartz inequality, we have
\begin{eqnarray*}
|a_ia_jb_{i,j}|/(B_iB_j) \leq \frac{a_i^2|b_{i,j}|}{2B_i^2}+\frac{a_j^2|b_{i,j}|}{2B_j^2}
\end{eqnarray*}
By H\"older's inequality, we obtain for all $i$:
\begin{eqnarray*}
\sum _{j=1}^m|b_{i,j}|\leq m^{1/p}B_i
\end{eqnarray*}
This implies, from the assumption on $p$, that
\begin{eqnarray*}
f(x-w(x))-f(x)<-\frac{1}{3}<w(x),\nabla f(x)>
\end{eqnarray*}
for all $x$ close enough to $x^*$. Thus we can choose $\gamma (x)=1$ for all such points. 

\end{proof}

\begin{remark}
0) While $0<\tau <1$ is needed in theoretical proofs, experiments show that the choice of $\tau$ (even of values $\geq 1$) does not really affect the performance. 

In the case where the inverse of $H(x)^{\intercal}H(x)+\delta ||F(x)||^{\tau}$ is expensive to compute, one can use other variants in the item "A general second order algorithm" which does not require computing the inverse matrix.

1) Levenberg-Marquardt algorithm \cite{wikipedia} is extensively studied in the literature. 

- The corresponding operator in  Levenberg-Marquardt algorithm  is $H(x)^{\intercal}H(x)+\epsilon $. In most of the work,  the associated dynamical system is $x\mapsto x-(H(x)^{\intercal}H(x)+\epsilon )^{-1}.H(x)^{\intercal}.F(x)$ (without a learning rate $\gamma (x)$).

- The choice of $\epsilon = ||F(x)||^{\tau}$ has been used extensively in the literature, starting with \cite{yamashita-fukushima, fan-yuan}, who showed that if one chooses $\tau \in [1,2]$ (the choice of $\epsilon =||F(x)||$ was previously suggested in \cite{kelley}) and if $x^*$ is a solution of $F(x)=0$ satisfying a certain "error bound" condition (which is weaker than requiring that $H(x^*)$ is invertible), and $x_0$ is close to $x^*$, then the sequence $\{x_n\}$ converges to $x^*$ with quadratic rate of convergence. "Error bound" is indeed similar to Lojasiewicz inequality, and recently there are works which prove local convergence near $x^*$ a root of $F(x)$ having the Lojasiewicz (gradient) inequality see \cite{ahookhosh-etal}. However, there was no study of convergence issues near critical points of $f(x)=||F(x)||^2$ which are roots of $F(x)$, or how one may avoid them. 

-  There are also works where $\epsilon$ is an interpolation between $||F(x)||^{\tau}$ and $||\nabla f(x)||^{\tau}$, see e.g. \cite{ahookhosh-etal}. However, in terms of avoidance of saddle points, it seems from the results we obtain in this paper that the term  $||F(x)||^{\tau}$ is better than the term $||\nabla f(x)||^{\tau}$.      

 - Armijo's Backtracking line search, to choose learning rate $\gamma (x)$, has been also used in the literature, see e.g. \cite{fan-yuan, bao-etal}, the latter paper also considers the inexact setting. However, the Backtracking line search used in those papers is not pure one as in our paper.  Fix a number $0<\eta <1$. Depending on whether the condition $||F(x_k-w_k)||\leq \eta ||F(x_k)||$, Armijo's condition will not be checked (in which case $\gamma (x)=1$) or will be checked. Indeed, this Backtracking scheme can be incorporated into the algorithms proposed in the current paper to obtain better local rate of convergence. 
 
 - In all of these works, we are not aware of study of global convergence to a critical point of $f(x)$ but not a root of $F(x)$. In particular, the question of avoidance of saddle points is not addressed in these papers.   
 
 A detailed comparison between New Q-Newton's method (Backtracking) and some well known modifications of Newton's method is given in \cite{truong-etal}.   

- Some new contribution in the algorithm Levenberg-Marquardt M proposed in this paper are: 

First, we normalise the search direction $w(x)$ to $w(x)/\max \{1,||w(x)||\}$, which helps to prove global convergence in the case the function $f$ does not have compact sublevels. Second, we use the ideas in \cite{truong-etal, truongnew, truong2021} to establish results on avoidance of saddle points.

2) When $F$ is a polynomial map, the question of solving zeros to $F$ is important in Algebraic Geometry. Purely theoretical tool for solving this  (over an algebraically closed field) is Groebner's basis, but it can be very slow in practice for large systems.  Question 17 in the famous list of open questions by Smale \cite{smale} is to find an algorithm to find such a solution, over $\mathbb{C}$, quickly (quadratic rate of convergence), in the generic situation. This question has been solved about some years ago \cite{beltran-pardo1, beltran-pardo2, cucker-burgisser, lairez}, using the homotopy continuation method and the theory in \cite{shub-smale}. The software Bertini provides numerical computations (for non-generic situations as well) based also on the homotopy continuation method. Currently, homotopy continuation method only applies in $\mathbb{C}$, while usually only solutions over $\mathbb{R}$ have physical meaning. 

There are routines for solving (both symbolically and numerically) in many computational softwares such as Mathematica, Mapple and MatLab. The special case $F(x)=$ a linear map is extensively studied in Numerical Linear Algebra. 
 
3) It could happen that a cluster point $x_{\infty}$ is a zero of $H(x)^{\intercal}F(x)$ but not of $F(x)$. In \cite{mehta-etal}, in the case where $F$ is a polynomial map, a numerical method based again on homotopy continuation method has been proposed with the aim to avoid this. 

Here we propose another approach, which apply to more general systems. We can assume, in a generic situation, that $0$ is a regular value of $F$. This means that for all zero $x^*$ of $F(x)=0$, the Jacobian $JF(x^*)$ is invertible. We can choose a small number $\epsilon >0$, and when choosing the learning rate $\gamma$ by Backtracking line search, also make sure that $|det (JF(x_{k+1}))|>\epsilon $. By letting $\epsilon \rightarrow 0$, we can gradually find all roots of $F$. When $F$ is a polynomial system, in such "generic situations" we have only a finite number of zeros to the system, and hence there is a positive $\epsilon >0$ so that for all $x^*$ with $F(x^*)=0$ then $|det (JF(x^*))|>\epsilon$. Hence, an alternative solution to (the real variable version, which usually has more physical meaning) Smale's 17th problem, and with an easy implementation, could be the following route: first, we establish an effective bound for such an $\epsilon$; and second, one find a way to choose an initial $x_0$ for which the sequence $\{x_n\}$ constructed by New Q-Newton's method Backtracking SE does not diverge to the boundary of the set $|det (JF(x^*))|>\epsilon$ (alternatively, one can modify New Q-Newton's method Backtracking SE a bit to make sure about this).   

4) New Q-Newton's method Backtracking SE can be extended to under/overdetermined systems, that is zeros of $G:\mathbb{R}^m\rightarrow \mathbb{R}^{m'}$, where $m\not= m'$.  We just need to note that $H $ is then an $m'\times m$ matrix, and $H^{\intercal}$ is an $m\times m'$ matrix. Another approach was first used in \cite{graves}, by using kind of pseudo-inverses, and has been extensively studied and extended. In the case where $F$ is a polynomial map, and over the field $\mathbb{C}$, another idea, based on the fact that any variety in $\mathbb{C}^m$ can be set-theoretically defined by at most $m$ equations \cite{storch, eisenbud-evans}, can be found in \cite{mehta-etal}.  

5) Finding isolated intersection points are also useful in various situations. For example, in dynamical systems, it is interesting to find hyperbolic periodic points, which supposedly play an important role in understanding the equilibrium measure (if any) of the given map. These are isolated intersection points between the diagonal and the graph of the iterates of the map. 

\end{remark}

{\bf Implementation and Experiments:} Will be updated at the GitHub link \cite{tuyenGitHub}. 

{\bf Acknowledgments.} The author would like to thank Jonathan Hauenstein for helping with some relevant questions. The author is partially supported by Young Research Talents grant 300814 from Research Council of Norway.

\end{document}